 \date{5 February 2015}
 \title{Forbidden subgraphs in the norm graph}
\author[Ball]{Simeon Ball }\thanks{The first author acknowledges the support of the project MTM2008-06620-C03-01 of the Spanish Ministry of Science and Education and the project 2014-SGR-1147 of the Catalan Research Council.}
\author[Pepe]{Valentina Pepe}\thanks{The second author acknowledges the support of the project "Decomposizione, propriet\`{a} estremali di grafi e combinatoria di polinomi ortogonali" of the SBAI Department of Sapienza University of Rome.}
 \newtheorem{theorem}{\sc Theorem}[section]
\begin{document}

\begin{abstract}
We show that the norm graph constructed in \cite{KRS1996} with $n$ vertices about $\frac{1}{2}n^{2-1/t}$ edges, which contains no copy of $K_{t,(t-1)!+1}$, does not contain a copy of $K_{t+1,(t-1)!-1}$.
\end{abstract}

\maketitle

\section{Introduction}

Let $H$ be a fixed graph. The {\em Tur\'an number} of $H$, denoted $ex(n,H)$, is the maximum number of edges a graph with $n$ vertices can have, which contains no copy of $H$. The Erd{\H o}s-Stone theorem from \cite{ES1946} gives an asymptotic formula for the Tur\'an number of any non-bipartite graph, and this formula depends on the chromatic number of the graph $H$.

When $H$ is a complete bipartite graph, determining the Tur\'an number is related to the ``Zarankiewicz problem'' (see \cite{B}, Chap. VI, Sect.2, and \cite{Furedi1996b} for more details and
references).
In many cases even the question of determining the right order of magnitude for $ex(n,H)$ is not known.

Let $K_{t,s}$ denote the complete bipartite graph with $t$ vertices in one class and $s$ vertices in the other. K{\H o}vari, S{\'o}s and Tur\'an \cite{KST1954} proved that for $s \geqslant t$
\begin{equation} \label{upper}
ex(n,K_{t,s}) \leqslant \genfrac{}{}{}{1}{1}{2}(s-1)^{1/t}n^{2-1/t}+\genfrac{}{}{}{1}{1}{2}(t-1)n.
\end{equation}

The norm graph $\Gamma (t)$, which we will define the next section, has $n$ vertices and about $\frac{1}{2}n^{2-1/t}$ edges. In \cite{ARS1999} (based on results from \cite{KRS1996}) it was proven that the graph $\Gamma (t)$ contains no copy of $K_{t,(t-1)!+1}$, thus proving that for $s \geqslant (t-1)!+1$,
$$
ex(n,K_{t,s}) >c n^{2-1/t}
$$
for some constant $c$.

In \cite{BP2012}, it was shown that $\Gamma (4)$ contains no copy of $K_{5,5}$, which improves on the probabilistic lower bound of Erd\H os and Spencer \cite{ES1974} for $ex(n,K_{5,5})$. In this article, we will generalise this result and prove that $\Gamma (t)$ contains no copy of $K_{t+1,(t-1)!-1}$. For $t \geqslant 5$, this does not improve the probabilistic lower bound of Erd\H os and Spencer,
$$
ex(n,K_{t,s}) \geqslant cn^{2-(s+t-2)/(st-1)}.
$$
As far as we are aware, it is however the deterministic construction of a graph with $n$ vertices containing no $K_{t+1,(t-1)!-1}$ with the most edges.

\section{The norm graph}

Suppose that $q=p^h$, where $p$ is a prime, and denote by ${\mathbb F}_q$ the finite field with $q$ elements. We will use the following properties of finite fields. For any $a,b \in {\mathbb F}_q$, $(a+b)^{p^i}=a^{p^i}+b^{p^i}$, for any $i \in {\mathbb N}$. Note that $(a-b)^{p^i}=a^{p^i}-b^{p^i}$, since either $p^i$ is odd or $-1=1$. Secondly, for all $a \in {\mathbb F}_{q^i}$, $a^q=a$ if and only if $a \in {\mathbb F}_q$. Finally $N(a)=a^{1+q+\cdots + q^{k-1}} \in {\mathbb F}_q$, for all $a \in {\mathbb F}_{q^k}$, since $N(a)^{q}=N(a)$.

Let $\mathbb{F}$ denote an arbitrary field.
We denote by $\mathbb{P}_n(\mathbb{F})$ the projective space arising from the $(n+1)$-dimensional vector space over $\mathbb{F}$. Throughout $\dim$ will refer to projective dimension. A point of
$\mathbb{P}_n(\mathbb{F})$ (which is a one-dimensional subspace of the vector space) will often be written as $\langle u \rangle$, where $u$ is a vector in the $(n+1)$-dimensional vector space over $\mathbb{F}$.

Let $\Gamma (t)$ be the graph with vertices $(a,\alpha) \in {\mathbb F}_{q^{t-1}} \times {\mathbb F}_q$, $\alpha \neq 0$, where $(a,\alpha)$ is joined to $(a',\alpha')$ if and only if $N(a+a')=\alpha\alpha'$. The graph $\Gamma (t)$ was constructed in \cite{KRS1996}, where it was shown to contain no copy of $K_{t,t!+1}$. In \cite{ARS1999} Alon, R\'onyai and Szab\'o proved that $\Gamma (t)$ contains no copy of $K_{t,(t-1)!+1}$.  Our aim here is to show that it also contains no $K_{t+1,(t-1)!-1}$, generalizing the same result for $t=5$ presented in \cite{BP2012}.

Let
$$
V=\{ (1,a)\otimes (1,a^q) \otimes \cdots \otimes (1,a^{q^{t-2}}) \ | \ a \in {\mathbb F}_{q^{t-1}} \} \subset \mathbb{P}_{2^{t-1}-1}(\mathbb{F}_{q^{t-1}}).
$$

The set $V$ is the affine part of an algebraic variety that is in turn a subvariety of the Segre variety $$
\Sigma=\underbrace{\mathbb{P}_1 \times \mathbb{P}_1 \times \cdots \times \mathbb{P}_1}_{t-1 \text{ times}},
$$
where $\mathbb{P}_1=\mathbb{P}_1({\mathbb F}_q)$.

The affine point $(1,a)\otimes (1,a^q) \otimes \cdots \otimes (1,a^{q^{t-2}})$ has coordinates indexed by the subsets of $T:=\{0,1,\ldots,t-1\}$, where the $S$-coordinate is
$$
(\prod_{i \in S}a^{q^i}),
$$
for any non-empty subset $S$ of $T$ and
$$
\displaystyle\prod_{i \in S}a^{q^i}=1
$$
when $S=\emptyset$ (see \cite{Pepe2011}).

Let $n=2^{t-1}-1$.

We order the coordinates of $\mathbb{P}_{n}(\mathbb{F}_{q^{t-1}})$ so that if the $i$-th coordinate corresponds to the subset $S$, then the $(n-i)$--th coordinate corresponds to the subset $T\setminus S$.

Embed the $\mathbb{P}_{n}(\mathbb{F}_{q^{t-1}})$ containing $V$ as a hyperplane section of $\mathbb{P}_{n+1}(\mathbb{F}_{q^{t-1}})$ defined by the equation $x_{n+1}=0$.

Let $\beta$ be the symmetric bilinear form on the $(n+2)$-dimensional vector space over $\mathbb{F}_{q^{t-1}}$ defined by
$$
\beta(u,v)=\sum_{i=0}^{n}u_i v_{n-i}- u_{n+1}v_{n+1}.
$$
Let $\perp$ be defined in the usual way, so that given a subspace $\Pi$ of $\mathbb{P}_{n+1}(\mathbb{F}_{q^{t-1}})$, $\Pi^{\perp}$ is the subspace of $\mathbb{P}_{n+1}(\mathbb{F}_{q^{t-1}})$ defined by
$$
\Pi^{\perp}=\{ v \ | \ \beta(u,v)=0, \ \mathrm{for} \  \mathrm{all} \  u \in \Pi \}.
$$

We wish to define the same graph $\Gamma (t)$, so that adjacency is given by the bilinear form. Let $P_{\infty}=(0,0,0,\ldots,1)$. Let $\Gamma'$ be a graph with vertex set the set of points on the lines joining the points of $V$ to $P_{\infty}$ obtained using only scalars in $\mathbb{F}_q$, distinct from $P_{\infty}$ and not contained in the hyperplane $x_{n+1}=0$. Join two vertices $\langle u \rangle$ and $\langle u' \rangle$ in $\Gamma'$ if and only if $\beta(u,u')=0$. It is a simple matter to verify that the graph $\Gamma'$ is isomorphic to the graph $\Gamma(t)$ since
$$
N(a+b)=\sum_{S \subseteq T} \prod_{i \in S,\  j \in T\setminus S} a^{q^i}b^{q^j}=\beta(u,v)+u_{n+1}v_{n+1},
$$
where
$$
u=(1,a)\otimes (1,a^q) \otimes \cdots \otimes (1,a^{q^{t-2}}),
$$
and
$$
v=(1,b)\otimes (1,b^q) \otimes \cdots \otimes (1,b^{q^{t-2}}).
$$
We shall refer to $\Gamma'$ as $\Gamma (t)$ from now on.

We recall some known properties of $\Sigma$ and its subvariety
$$\mathcal{V}=\{(a,b)\otimes (a^q,b^q)\otimes \cdots (a^{q^{t-2}},b^{q^{t-2}}) \ | \ (a,b) \in \mathbb{P}_1(\mathbb{F}_{q^{t-1}})\}$$
and prove a new one in Theorem~\ref{main}.

Let $\overline{{\mathbb F}_{q}}$ denote the algebraic closure of ${\mathbb F}_q$ and consider $\Sigma$ as the Segre variety over $\overline{{\mathbb F}_{q}}$.

\begin{theorem} \label{smooth}
$\Sigma$ is a smooth irreducible variety.
\end{theorem}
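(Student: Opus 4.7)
The plan is to reduce the theorem to two standard facts: the Segre map $\sigma\colon \mathbb{P}_1\times\cdots\times\mathbb{P}_1\to \mathbb{P}_{2^{t-1}-1}$ is a closed embedding, so it is an isomorphism onto its image $\Sigma$; and products of smooth (respectively irreducible) varieties over an algebraically closed field are again smooth (respectively irreducible). Since the ambient field is $\overline{{\mathbb F}_q}$, both properties will transfer to $\Sigma$ without any subtlety about geometric irreducibility.

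For irreducibility I would argue by induction on $t-1$. The case $t-1=1$ is immediate since $\mathbb{P}_1$ is irreducible. For the inductive step one uses the standard result that if $X$ and $Y$ are irreducible varieties over $\overline{{\mathbb F}_q}$, then $X\times Y$ is irreducible: any decomposition $X\times Y=Z_1\cup Z_2$ into closed sets is shown to force one of the pieces to cover $X\times Y$, by looking at the irreducible fibres $\{x\}\times Y$ and applying irreducibility of $X$.

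For smoothness I would cover $\mathbb{P}_1\times\cdots\times\mathbb{P}_1$ explicitly. Each $\mathbb{P}_1$ is covered by the two affine charts $\{(1,a):a\in\overline{{\mathbb F}_q}\}$ and $\{(b,1):b\in\overline{{\mathbb F}_q}\}$, each isomorphic to $\mathbb{A}^1$. Their products give an open covering of the whole variety by $2^{t-1}$ charts, each isomorphic to $\mathbb{A}^{t-1}$, and $\mathbb{A}^{t-1}$ is smooth. Transporting this cover through the Segre embedding produces a smooth atlas on $\Sigma$, so $\Sigma$ is smooth at every point.

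There is no real obstacle here: the theorem is a structural fact about Segre varieties that the authors are recording for use later in the paper. The only item requiring a small amount of care is the invocation that the Segre embedding is a genuine closed immersion (so that smoothness and irreducibility pass to the image), and the explicit matching between points of $\mathbb{P}_1\times\cdots\times\mathbb{P}_1$ and the tensor coordinates indexed by subsets of $T$ given earlier in the excerpt; one can, if desired, replace the closed-immersion step by directly using the subset-indexed affine charts on $\Sigma$ where one of the two factors of each $\mathbb{P}_1$ is normalised to $1$.
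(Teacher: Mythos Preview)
Your argument is correct: the Segre map is a closed immersion, and over the algebraically closed field $\overline{\mathbb{F}_q}$ the product $\mathbb{P}_1\times\cdots\times\mathbb{P}_1$ is irreducible (by the standard fibre argument you sketch) and smooth (being covered by copies of $\mathbb{A}^{t-1}$), so both properties transfer to the image $\Sigma$.

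As for comparison with the paper: the paper does not actually give a proof of this theorem. It is stated as one of several ``known properties'' of $\Sigma$ (alongside its dimension and degree) and is simply recorded for later use in Theorem~\ref{main}. So you have not diverged from the paper's approach so much as supplied what the authors left implicit as folklore about Segre varieties. Your write-up is a perfectly reasonable way to fill that gap; if anything, you could shorten it by citing any standard algebraic-geometry text for the closed-immersion property of the Segre embedding and the behaviour of smoothness and irreducibility under products.
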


\begin{theorem}
The dimension of $\Sigma$ (as algebraic variety) is $t-1$ and its degree is $(t-1)!$.
\end{theorem}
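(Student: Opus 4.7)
The plan is to prove the two assertions separately, both via standard properties of the Segre embedding $s \colon (\mathbb{P}_1)^{t-1} \hookrightarrow \mathbb{P}_{2^{t-1}-1}$, which is a closed embedding identifying $\Sigma$ with the abstract product $(\mathbb{P}_1)^{t-1}$.

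For the dimension, since the dimension of a product of varieties equals the sum of the dimensions of the factors, one has $\dim \Sigma = (t-1)\cdot \dim \mathbb{P}_1 = t-1$.

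For the degree, I would compute the intersection number of $\Sigma$ with a generic linear subspace of complementary dimension by working in the Chow (or cohomology) ring $A^{*}((\mathbb{P}_1)^{t-1}) = \mathbb{Z}[H_1,\ldots,H_{t-1}]/(H_1^2,\ldots,H_{t-1}^2)$, where $H_i$ is the pullback of the hyperplane class of $\mathbb{P}_1$ under the $i$-th projection, so that $H_1 H_2 \cdots H_{t-1}$ is the class of a single point. The coordinate description given in the paper, namely that the $S$-coordinate of a point of $\Sigma$ is the monomial $\prod_{i\in S} a^{q^i}$, shows that the hyperplane class of the ambient $\mathbb{P}_{2^{t-1}-1}$ pulls back under $s$ to the divisor class $H_1 + H_2 + \cdots + H_{t-1}$. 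Therefore
$$
\deg \Sigma = (H_1 + H_2 + \cdots + H_{t-1})^{t-1},
$$
and expanding by the multinomial theorem, using $H_i^2 = 0$ for every $i$, the only surviving term is the pure monomial $H_1 H_2 \cdots H_{t-1}$, with multinomial coefficient $(t-1)!/(1!\cdots 1!) = (t-1)!$. This gives $\deg \Sigma = (t-1)!$.

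There is no substantial obstacle: the only step that requires care is the identification of the pullback of the ambient hyperplane class as $H_1 + \cdots + H_{t-1}$, which is immediate from the form of the Segre coordinates. As an alternative route, one could prove the degree formula by induction on $t$, using the classical multiplicative formula for the degree of a Segre product of projective varieties, with base case $\Sigma = \mathbb{P}_1$ of degree one.
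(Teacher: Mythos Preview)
The paper does not supply a proof of this statement; it is listed among the ``known properties of $\Sigma$'' and treated as a classical fact about Segre varieties. Your argument via the Chow ring of $(\mathbb{P}_1)^{t-1}$ is the standard one and is correct. One small point: the coordinate description you quote from the paper is actually that of the affine part $V$ of the subvariety $\mathcal{V}$, not of $\Sigma$ itself; but the same reasoning applies verbatim to the full Segre map $((x_0:y_0),\ldots,(x_{t-2}:y_{t-2})) \mapsto \bigl(\prod_{i\in S} y_i \prod_{j\notin S} x_j\bigr)_{S\subseteq T}$, from which the pullback of the ambient hyperplane class is visibly $H_1+\cdots+H_{t-1}$, so your computation goes through unchanged.
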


\begin{theorem}\cite{Pepe2011}\label{independence1}
Any $t$ points of $\mathcal{V}$ are in general position.
\end{theorem}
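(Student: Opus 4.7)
I interpret ``any $t$ points of $\mathcal{V}$ are in general position'' as saying that any $t$ of them are linearly independent as points of $\mathbb{P}_{2^{t-1}-1}(\mathbb{F}_{q^{t-1}})$. Fix $t$ distinct points $P_1,\dots,P_t$ of $\mathbb{P}_1(\mathbb{F}_{q^{t-1}})$, represented by pairs $(a_i,b_i)$, and write
$$
v_i=(a_i,b_i)\otimes (a_i^q,b_i^q)\otimes\cdots\otimes (a_i^{q^{t-2}},b_i^{q^{t-2}}).
$$
The plan is, for each $k\in\{1,\dots,t\}$, to exhibit a hyperplane containing every $v_i$ with $i\neq k$ but not containing $v_k$. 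Running this for each $k$ rules out any nontrivial linear relation $\sum_i\lambda_i v_i=0$.

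The construction exploits the Segre structure. For each $j\in\{0,\dots,t-2\}$, a choice of nonzero linear form $\ell_j(x,y)=\alpha_j x+\beta_j y$ on the $j$-th factor $\mathbb{P}_1$ yields, via the Segre embedding, a linear form $\Lambda=\ell_0\otimes\cdots\otimes\ell_{t-2}$ on the ambient $\mathbb{P}_{2^{t-1}-1}$, with
$$
\Lambda(v_i)=\prod_{j=0}^{t-2}\ell_j(a_i^{q^j},b_i^{q^j}).
$$
Thus $\Lambda(v_i)$ vanishes as soon as a single factor vanishes. Fix $k$, pick any bijection $j\mapsto i_j$ from $\{0,\dots,t-2\}$ onto $\{1,\dots,t\}\setminus\{k\}$, and set $\ell_j(x,y):=a_{i_j}^{q^j}y-b_{i_j}^{q^j}x$, so that $\ell_j$ vanishes on $(a_{i_j}^{q^j},b_{i_j}^{q^j})$. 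Each $\ell_j$ is a nonzero form because $(a_{i_j},b_{i_j})$ is a projective point. Then $\Lambda(v_{i_j})=0$ for every $j$, i.e.\ $\Lambda(v_i)=0$ for every $i\neq k$.

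It remains to check $\Lambda(v_k)\neq 0$. Each factor is
$$
\ell_j(a_k^{q^j},b_k^{q^j})=a_{i_j}^{q^j}b_k^{q^j}-b_{i_j}^{q^j}a_k^{q^j}=(a_{i_j}b_k-b_{i_j}a_k)^{q^j},
$$
using the Frobenius identity recalled at the start of Section~2, and this is nonzero since the projective points $P_{i_j}$ and $P_k$ are distinct in $\mathbb{P}_1(\mathbb{F}_{q^{t-1}})$. Hence $\Lambda(v_k)\neq 0$, completing the argument. Conceptually, the proof is a Segre-variety analogue of the classical independence of $n+1$ distinct points on a rational normal curve in $\mathbb{P}_n$; the main subtlety to keep an eye on is that the Frobenius twisting of the parametrization is compatible with this ``factor-wise killing'' scheme, but this is immediate from $(x-y)^{q^j}=x^{q^j}-y^{q^j}$, so no serious obstacle arises.
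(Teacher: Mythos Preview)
The paper does not supply a proof of this theorem; it is quoted from \cite{Pepe2011} and stated without argument, so there is no ``paper's own proof'' to compare against line by line. That said, your argument is correct and self-contained: the tensor of linear forms $\Lambda=\ell_0\otimes\cdots\otimes\ell_{t-2}$ is a genuine linear functional on the ambient tensor space, it evaluates on pure tensors as the product $\prod_j\ell_j(a_i^{q^j},b_i^{q^j})$, and your choice of $\ell_j$ kills exactly one $v_{i_j}$ per factor while the Frobenius identity $(a_{i_j}b_k-b_{i_j}a_k)^{q^j}$ shows none of the factors vanish at $v_k$. Running over $k$ forces every coefficient in a putative dependence to be zero, so the $t$ points are linearly independent, which is precisely the meaning of ``in general position'' here since $t\leqslant 2^{t-1}$.

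For context, the proof in \cite{Pepe2011} proceeds instead by a determinant computation: one selects $t$ suitable coordinates and shows that the resulting $t\times t$ minor factors (after column operations and the Frobenius identity) as a product of powers of the pairwise differences $a_i b_j - a_j b_i$, a Vandermonde-type nonvanishing. Your approach is the dual of this---you exhibit separating hyperplanes rather than a nonvanishing minor---and is arguably cleaner, since it makes the role of the Segre factorisation explicit and avoids any matrix manipulation. Both arguments ultimately rest on the same two ingredients: distinctness of the $P_i$ in $\mathbb{P}_1$ and additivity of Frobenius.
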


\begin{theorem}\cite{GP}\label{independence2}
If $t+1$ points span a $(t-1)$-dimensional projective space, then that space contains $q+1$ points of $\mathcal{V}$.
\end{theorem}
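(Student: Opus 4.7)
My plan is to prove the stronger statement that the $t+1$ parameters $(a_i, b_i) \in \mathbb{P}^1(\mathbb{F}_{q^{t-1}})$ of the points $P_{(a_i,b_i)}$ all lie on a common $\mathbb{F}_q$-subline of $\mathbb{P}^1(\mathbb{F}_{q^{t-1}})$. The group $\mathrm{PGL}_2(\mathbb{F}_{q^{t-1}})$ acts linearly on the ambient space via $g \mapsto g \otimes g^{(q)} \otimes \cdots \otimes g^{(q^{t-2})}$, preserving $\mathcal{V}$ and mapping $\mathbb{F}_q$-sublines to $\mathbb{F}_q$-sublines. For the standard subline $\mathbb{P}^1(\mathbb{F}_q)$, every parameter $(a,b)$ has entries in $\mathbb{F}_q$, so $P_{(a,b)} = (a,b)^{\otimes(t-1)}$ lies in the image of the degree-$(t-1)$ Veronese embedding, which spans a $(t-1)$-dimensional subspace of $\mathbb{P}^{2^{t-1}-1}$; this subspace contains exactly the $q+1$ points of $\mathcal{V}$ corresponding to $\mathbb{P}^1(\mathbb{F}_q)$. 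Thus proving the subline property yields the theorem.

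The first step is to force the dependence coefficients into $\mathbb{F}_q$. By Theorem~\ref{independence1}, the dependence $\sum_i \lambda_i P_i = 0$ is unique up to scalar with all $\lambda_i \ne 0$. In the subset-indexed coordinates, the $S$-coordinate of $P_{(a,b)}^{(q)}$ equals the $(S{+}1 \bmod (t{-}1))$-coordinate of $P_{(a,b)}$, so the coordinate-wise Frobenius $F$, restricted to $\mathcal{V}$, agrees with the $\mathbb{F}_{q^{t-1}}$-linear permutation $\pi$ cyclically shifting these coordinates. Applying $F$ to the dependence gives $\sum_i \lambda_i^q \pi P_i = \pi(\sum_i \lambda_i^q P_i) = 0$, and invertibility of $\pi$ yields $\sum_i \lambda_i^q P_i = 0$. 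Uniqueness then forces $\lambda_i^q = \mu \lambda_i$, so $\lambda_i/\lambda_0 \in \mathbb{F}_q$, and after rescaling we may take $\lambda_i \in \mathbb{F}_q^*$. Using 3-transitivity of $\mathrm{PGL}_2(\mathbb{F}_{q^{t-1}})$ on $\mathbb{P}^1(\mathbb{F}_{q^{t-1}})$, I next reduce to the case $(a_0, b_0), (a_1, b_1), (a_2, b_2) = (0,1), (1,0), (1,1)$.

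Via the characteristic-$p$ identity $(as+bt)^M = \prod_{j=0}^{t-2}(a^{q^j}s^{q^j} + b^{q^j}t^{q^j})$ with $M = (q^{t-1}-1)/(q-1)$, the dependence translates into the polynomial identity $\sum_i \lambda_i(a_is + b_it)^M = 0$ in $\mathbb{F}_{q^{t-1}}[s,t]$. Extracting, for each subset $S \subset \{0, \ldots, t-2\}$ with $S \ne \emptyset, T$, the coefficient of $s^{M-\ell_S}t^{\ell_S}$ (where $\ell_S = \sum_{j \in S} q^j$) and using the three standard points, I obtain
\[
\sum_{i=3}^{t} \lambda_i X_{S, i} = -\lambda_2, \qquad X_{S, i} := \prod_{j \in S} b_i^{q^j} \prod_{j \notin S} a_i^{q^j} \in \mathbb{F}_{q^{t-1}},
\]
with $\lambda_i, \lambda_2 \in \mathbb{F}_q$. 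The main obstacle is to show that this overdetermined system --- $2^{t-1}-2$ equations in $t-2$ projective unknowns, with the Frobenius relations $X_{\sigma(S), i} = X_{S, i}^q$ (where $\sigma$ shifts $S$ by $1$) linking the equations within each orbit, and with $\mathbb{F}_{q^{t-1}}$-valued left-hand sides against $\mathbb{F}_q$-valued right-hand sides --- forces every $X_{S, i}$ to lie in $\mathbb{F}_q$, i.e.\ $(a_i, b_i) \in \mathbb{P}^1(\mathbb{F}_q)$ for all $i \geq 3$. This is the technical core; once achieved, all $t+1$ parameters lie in $\mathbb{P}^1(\mathbb{F}_q)$, the $(t-1)$-dimensional span coincides with the Veronese image containing exactly $q+1$ points of $\mathcal{V}$, and undoing the $\mathrm{PGL}_2$-transformation completes the proof.
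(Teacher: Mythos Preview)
The paper does not supply its own proof of this statement: it is quoted from \cite{GP} without argument. So there is no in-paper proof to compare your proposal to, and I can only assess the proposal on its own merits.

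Your overall architecture is sound and is essentially the natural route: use the Frobenius/permutation coincidence on $\mathcal{V}$ to force the dependence coefficients into $\mathbb{F}_q$, then use $\mathrm{PGL}_2(\mathbb{F}_{q^{t-1}})$ to pin three parameters to $\mathbb{P}^1(\mathbb{F}_q)$, and finally argue that the remaining parameters are forced into $\mathbb{F}_q$ as well, so that the $(t-1)$-space is the Veronese span of the standard subline and meets $\mathcal{V}$ in exactly its $q+1$ points. The first two steps are carried out correctly; in particular, your Frobenius argument for $\lambda_i/\lambda_0\in\mathbb{F}_q$ is clean and correct.

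The genuine gap is exactly where you flag it: you state that the overdetermined system
\[
\sum_{i=3}^{t}\lambda_i\,X_{S,i}=-\lambda_2,\qquad \emptyset\subsetneq S\subsetneq T,
\]
must force each $X_{S,i}\in\mathbb{F}_q$ (equivalently $b_i\in\mathbb{F}_q$ after normalising $a_i=1$), but you do not prove it. This is not a formality. For singleton $S$ the equations are Frobenius-conjugates of a single relation and carry no new information beyond $\sum_i\lambda_i b_i\in\mathbb{F}_q$; the actual constraints come from the multiplicative interactions in $X_{S,i}=\prod_{j\in S}b_i^{q^j}$ for $|S|\geq 2$, and you have not shown how to exploit them. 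As written, the proposal is a plan with its decisive lemma missing.

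If you want to close the gap along these lines, one workable strategy is to bypass the coordinate system entirely: having shown the $\lambda_i$ lie in $\mathbb{F}_q$, argue that the $(t-1)$-dimensional span $\Pi$ is stable under the cyclic coordinate permutation $\pi$ (since each $P_i=\pi^{-1}F(P_i)$ and $F$ fixes $\Pi$ once it is defined over $\mathbb{F}_q$), and then show that any point of $\mathcal{V}$ lying in a $\pi$-invariant $(t-1)$-space must have its parameter in an $\mathbb{F}_q$-subline. That reframing replaces your system of $2^{t-1}-2$ scalar equations by a single invariance statement, which is closer to how the result is established in \cite{GP}.
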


\begin{theorem}\label{main}
If a subspace of codimension $t$ contains a finite number of points of $\Sigma$ then it contains at most $(t-1)!-2$ points of $\Sigma$.
\end{theorem}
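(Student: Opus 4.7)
The plan is to combine B\'ezout's theorem with a tangent-space analysis at the intersection points and an examination of the projection $\pi\colon\Sigma\dashrightarrow\mathbb{P}_{t-1}$ from $\Pi$. Let $\Pi$ be a codimension-$t$ subspace for which $\Pi\cap\Sigma$ is a finite set of $k$ points. Since $\Sigma$ is smooth (Theorem~\ref{smooth}), irreducible of dimension $t-1$, non-degenerate in $\mathbb{P}_n$, and of degree $(t-1)!$, every codimension-$(t-1)$ subspace $\Pi'\supset\Pi$ meets $\Sigma$ properly in a zero-dimensional scheme of length $(t-1)!$ by B\'ezout, so $k\leq(t-1)!$.

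To sharpen the bound, I would distinguish cases according to the local intersection multiplicity at each $P\in\Pi\cap\Sigma$. The projective tangent space $T_P\Sigma$ has dimension $t-1$, and if $\Pi\cap T_P\Sigma\supsetneq\{P\}$ at some $P$ (that is, $\Pi$ contains a non-zero tangent direction of $\Sigma$ at $P$) then the intersection multiplicity of every $\Pi'\supset\Pi$ with $\Sigma$ at $P$ is at least $2$, giving $k+2\leq(t-1)!$ directly. Otherwise $\Pi\cap T_P\Sigma=\{P\}$ at every fixed point, and for generic $\Pi'\supset\Pi$ each fixed intersection is transversal; writing $r$ for the number of moving points of $\Pi'\cap\Sigma$ outside $\Pi$, B\'ezout then gives $k+r=(t-1)!$, and the theorem reduces to showing $r\geq 2$.

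It remains to exclude $r\in\{0,1\}$ in the transversal case, where $r=\deg\pi$ if $\pi$ is dominant and $r=0$ otherwise. If $r=1$ then $\pi$ is birational from the Segre variety $\Sigma=\mathbb{P}_1^{t-1}$ onto $\mathbb{P}_{t-1}$; since every line on $\Sigma$ is a coordinate line (one factor varies, the others are fixed) and is sent by $\pi$ either to a point (when the line meets $\Pi$) or isomorphically to a line of $\mathbb{P}_{t-1}$, a careful incidence count among the $t-1$ families of coordinate lines, the $(t-1)!-1$ fixed points, and the hyperplanes of $\mathbb{P}_{t-1}$ should contradict the general-position properties of points on $\Sigma$; for $\mathbb{F}_{q^{t-1}}$-points of the subvariety $\mathcal{V}$ these are made explicit by Theorems~\ref{independence1} and~\ref{independence2}, and the contradiction would follow after a suitable specialisation from $\Sigma$ to $\mathcal{V}$. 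If $r=0$ then $\pi$ is not dominant, so the $(t-1)!$ fixed points satisfy an extra linear relation beyond the codimension-$t$ incidence, and a Cayley--Bacharach-type residue argument for complete intersections on the Segre variety again forces the same sort of contradiction. The main technical obstacle I expect is the birational case $r=1$: it requires controlling how the coordinate lines of $\Sigma$ interact with the linear centre $\Pi$ and combining this with the fixed-point configuration rigidly enough to invoke the independence results of the previous section.
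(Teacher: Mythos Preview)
Your case split has two problems. First, the non-transversal branch is off by one: if $\Pi$ contains a tangent direction at a single point $P$, then every $\Pi'\supset\Pi$ meets $\Sigma$ with multiplicity $\geq 2$ at $P$, so the length-$(t-1)!$ B\'ezout count only gives $(k-1)+2\leq(t-1)!$, i.e.\ $k\leq(t-1)!-1$, not $(t-1)!-2$. Second, the transversal branch --- which you identify as the heart of the matter --- is not an argument but a wish list: the exclusion of $r=1$ via ``incidence counts among coordinate lines'' and ``specialisation from $\Sigma$ to $\mathcal{V}$'' is not carried out, and the $r=0$ case via a ``Cayley--Bacharach-type residue argument'' is equally unsubstantiated. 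Note that $(\mathbb{P}_1)^{t-1}$ \emph{is} birational to $\mathbb{P}_{t-1}$, so ruling out $r=1$ genuinely requires showing that no \emph{linear} projection realises this, which is not obvious.

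The paper's proof avoids the projection entirely, and the idea you are missing is this: because $\Pi$ has codimension $t$ rather than $t-1$, for \emph{every} $P\in\Pi\cap\Sigma$ one has
\[
\dim\langle \Pi,\,T_P\Sigma\rangle \;\leq\; (n-t)+(t-1)-\dim(\Pi\cap T_P\Sigma)\;\leq\; n-1,
\]
so one can always choose a hyperplane $H\supset\Pi$ that contains the \emph{entire} tangent space $T_P\Sigma$. Then $P$ is a singular point of $H\cap\Sigma$ regardless of whether $\Pi$ itself was tangent to $\Sigma$ at $P$. Pick a second point $R\in\Pi\cap\Sigma$ and do the same with a hyperplane $H'\supset\langle\Pi,T_R\Sigma\rangle$; now $P$ and $R$ are both singular (or multiple) in $H\cap H'\cap\Sigma$, and cutting further to a suitable linear space through $\Pi$ of complementary dimension counts each with multiplicity $\geq 2$ against the degree $(t-1)!$. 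Some care is needed when $H\cap\Sigma$ is reducible and $H'$ fails to drop the dimension, but this is handled by tracking which irreducible components contain $P$ and $R$. The extra codimension is spent on forcing tangency at two chosen points, not on setting up a projection.
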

\begin{proof}
By Theorem~\ref{smooth}, $\Sigma$ is smooth, so it is regular at each of its points, i.e., if $T_P\Sigma$ is the tangent space of $\Sigma$ at a point $P \in \Sigma$, then $\dim T_P\Sigma=t-1$.

Let $\Pi$ be a subspace of codimension $t$ containing a finite number of points of $\Sigma$. Let $P \in \Pi \cap \Sigma$. Then $\dim \langle T_P\Sigma, \Pi \rangle \leqslant n-1$. Therefore, there is a hyperplane $H$ containing $\langle T_P\Sigma, \Pi \rangle$.

Suppose that $H$ contains another tangent space $T_R\Sigma$, with $R \in \Pi \cap \Sigma$. The algebraic variety $H\cap \Sigma$ has dimension $t-2$ (since $\Sigma$ is irreducible) and it has two singular points, $P$ and $R$. Since $\dim H\cap \Sigma=t-2$ as an algebraic variety, there must be a linear subspace $\Pi_1$ of codimension $t-2$ in $H$ containing $\Pi$ and such that $\Pi_1 \cap H\cap \Sigma$ consists of $\deg (H \cap \Sigma) \leqslant (t-1)!$ points of $\Sigma$ counted with their multiplicity. Since $\Pi_1$ contains $P$ and $R$, which are singular points and so with multiplicity at least $2$, we have that
$$
|\Pi\cap \Sigma|\leqslant |\Pi_1\cap\Sigma|\leqslant (t-1)!-2.
$$

Suppose now that $H$ does not contain any other tangent space $T_R\Sigma$ with $R \in \Pi \cap \Sigma$, $R \neq P$. Then take $R \in \Pi \cap \Sigma$ and consider a hyperplane $H'\neq H$ containing $\langle T_R\Sigma, \Pi \rangle$. Then the tangent spaces of $P$ and $R$ with respect to $H\cap H'\cap \Sigma$ are $T_P\Sigma \cap H'$ and $T_R\Sigma \cap H$, and they both have dimension $t-2$ (as linear spaces).

If $\dim H\cap H'\cap \Sigma=t-3$ as an algebraic variety, then $P$ and $R$ are two singular points of $H\cap H'\cap \Sigma$ and we can find, as before, a linear subspace $\Pi_1$ of codimension $t-3$ in $H\cap H'$ such that it contains $\Pi$ and intersects $H\cap H'\cap \Sigma$ in $\deg (H \cap H'\cap \Sigma) \leqslant (t-1)!$ points, counted with their multiplicity. Since $P$ and $R$ have multiplicity at least $2$, we have
$$
|\Pi\cap \Sigma|\leqslant |\Pi_1\cap\Sigma|\leqslant (t-1)!-2.
$$

If $\dim H\cap H'\cap \Sigma=t-2$ as an algebraic variety, then $H\cap \Sigma$ is reducible. Hence, we have
$$
H\cap \Sigma=\mathcal{V}_1 \cup \mathcal{V}_2 \cup \cdots \cup \mathcal{V}_r,
$$
where $\mathcal{V}_i$ is an irreducible variety of dimension $t-2$, for all $i=1,\ldots,r$. So we have
$$
H\cap H'\cap \Sigma=\mathcal{V}_1 \cup \mathcal{V}_2 \cup \cdots  \cup \mathcal{V}_s \cup \mathcal{W}_{s+1}\cup \mathcal{W}_{s+2}\cup \cdots \cup \mathcal{W}_{r},
$$
where $\mathcal{W}_i$ is a hyperplane section of $\mathcal{V}_i$, for all $i=s+1,\ldots,r$.
We observe that also $H'\cap\Sigma$ has to be reducible and, since the decomposition in irreducible components is unique, we have
$$
H'\cap \Sigma=\mathcal{V}_1 \cup \mathcal{V}_2 \cup \cdots \cup \mathcal{V}_s\cup \mathcal{V}'_{s+1} \cup \mathcal{V}'_{s+2}\cup \cdots \cup \mathcal{V}'_r,$$
where $\mathcal{V}_i$ and $\mathcal{V}'_j$ are irreducible varieties of dimension $t-2$.

We have, by hypothesis, that $T_P\Sigma \subset H$ and $P \in \Pi$. So either $P\in \mathcal{V}_i$ and it is singular for $\mathcal{V}_i$, for some $i \in \{1,2,\ldots,r\}$, or it is not singular for  $\mathcal{V}_{\ell}$, for any $\ell \in \{1,2,\ldots,r\}$.

Suppose we are in the first case. We know that $P \in \Pi \subset H'$. If $\mathcal{V}_i \subseteq H'$, then $P$ is singular for an irreducible component of $H'\cap \Sigma$ and so $T_P\Sigma \subset H'$, contradicting our hypothesis, so $\mathcal{V}_i$ is not contained in $H'$ and $H' \cap \mathcal{V}_i=\mathcal{W}_i$. We have that $\dim T_P\Sigma\cap H'=t-2$ (as linear subspace) and $\dim \mathcal{W}_i=t-3$ (as algebraic variety), so $P$ is singular for $\mathcal{W}_i$.

Suppose now that $P$ is not singular for any $\mathcal{V}_i$, so the dimension of $T_P \mathcal{V}_i$, as a subspace, is $t-2$. If $P \not\in \mathcal{V}_j$, for any $i\neq j$, then
$$
T_P(H \cap \Sigma)=T_P(\mathcal{V}_i)=T_P(\Sigma),
$$
a contradiction since the dimension of $T_P(\Sigma)$ is $t-1$. Hence $P \in \mathcal{V}_i \cap \mathcal{V}_j$, and so $P$ is contained in the intersection of two components of $H'\cap \Sigma$, so it is again a singular (or multiple) point. The same is true for the point $R$ such that $T_R\Sigma \subset H'$, so in
$$
\mathcal{V}_1 \cup \mathcal{V}_2 \cup \cdots  \cup \mathcal{V}_s \cup \mathcal{W}_{s+1}\cup \mathcal{W}_{s+2}\cup \cdots \cup \mathcal{W}_{r}$$
there are at least two multiple points and when we sum up all the degrees, we count at least two points twice, hence, by
$$ \sum_{i=1}^s \deg \mathcal{V}_i+\displaystyle \sum_{j=s+1}^r \deg \mathcal{W}_j \leqslant (t-1)!,
$$
we get that the number of points in
$$\Pi \cap (\mathcal{V}_1 \cup \mathcal{V}_2 \cup \cdots  \cup \mathcal{V}_s \cup \mathcal{W}_{s+1}\cup \mathcal{W}_{s+2}\cup \cdots \cup \mathcal{W}_{r}),$$
is at most $(t-1)!-2$.
\end{proof}

{\bf Remark} One could wonder whether one could try with one more hyperplane $H''$ such that $T_Q\Sigma \subset H''$, $T_Q\Sigma\nsubseteq H$,$T_Q\Sigma\nsubseteq H$' and $Q \in \Pi$. However, it can happen that $H \cap H' \cap H''=H\cap H'$, so $\dim T_Q\Sigma \cap H\cap H'\cap H''=t-2$ (as linear space) and $\dim H\cap H'\cap H''\cap \Sigma=t-2$, so $Q$ would not be a singular point of
$$H\cap H'\cap H''\cap \Sigma=H\cap H'\cap \Sigma.$$

\begin{theorem}
For $q \geqslant (t-1)!+1$ the graph $\Gamma (t)$ contains no $K_{t+1,(t-1)!-1}$.
\end{theorem}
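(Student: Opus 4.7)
The plan is to translate the existence of a $K_{t+1,(t-1)!-1}$ into the statement that some subspace of codimension $t$ in $\mathbb{P}_n(\mathbb{F}_{q^{t-1}})$ contains at least $(t-1)!-1$ points of $\Sigma$, and then derive a contradiction from Theorem~\ref{main}. Assume for contradiction that such a bipartite subgraph exists, with parts $\{\langle v_i\rangle\}_{i=1}^{t+1}$ and $\{\langle w_j\rangle\}_{j=1}^{(t-1)!-1}$. Writing $v_i = u_{a_i} + \alpha_i P_\infty$ and $w_j = u_{b_j} + \beta_j P_\infty$ with the notation of Section~2, the bilinear-form definition of adjacency forces every $\langle w_j\rangle$ to lie in $\Pi := \langle v_1,\ldots,v_{t+1}\rangle^\perp$. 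After replacing the $v_i$ by a linearly independent subset spanning the same subspace if needed, $\Pi$ has codimension $t+1$ in $\mathbb{P}_{n+1}(\mathbb{F}_{q^{t-1}})$.

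Next I would project from $P_\infty$ onto the hyperplane $\{x_{n+1}=0\}\cong\mathbb{P}_n$. Because each $v_i$ has nonzero last coordinate, $P_\infty\notin\Pi$; consequently the projection restricted to $\Pi$ is injective and its image $\Pi^*$ has codimension exactly $t$ in $\mathbb{P}_n$. Each $w_j$ maps to $\langle u_{b_j}\rangle \in V\subset\Sigma$, and these images are automatically distinct, since the line through any two $\langle w_j\rangle$'s lies in $\Pi$ and therefore cannot pass through $P_\infty$. Thus $\Pi^*$ contains at least $(t-1)!-1$ points of $\Sigma$, and Theorem~\ref{main} will give the desired contradiction, provided its hypothesis that $\Pi^*\cap\Sigma$ is finite can be verified.

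Verifying this finiteness is the step I expect to be the main obstacle, since it amounts to ruling out a positive-dimensional irreducible component of $\Pi^*\cap\Sigma$. This is where the hypothesis $q\geq(t-1)!+1$ should come into play, together with Theorems~\ref{independence1} and~\ref{independence2}. For $t\geq 4$ one has $(t-1)!-1\geq t+1$, so I can select $t+1$ of the distinct points of $\mathcal{V}$ in $\Pi^*$: by Theorem~\ref{independence1} any $t$ of them are linearly independent, so the $t+1$ span a projective subspace of dimension either $t$ or $t-1$, and in the latter case Theorem~\ref{independence2} places $q+1\geq(t-1)!+2$ points of $\mathcal{V}$ in that $(t-1)$-dimensional span. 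Combining such configurations with the degree bound $\deg\Sigma=(t-1)!$ should exclude any positive-dimensional irreducible component of $\Pi^*\cap\Sigma$, making Theorem~\ref{main} applicable and completing the argument.
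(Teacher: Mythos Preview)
Your overall strategy---project the common neighbours into the hyperplane $x_{n+1}=0$, land in a codimension-$t$ subspace, and invoke Theorem~\ref{main}---is exactly the paper's. However, two steps are not correctly handled.

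\textbf{The dimension of $\langle v_1,\ldots,v_{t+1}\rangle$.} You assert that after passing to a linearly independent subset, your $\Pi=\langle v_1,\ldots,v_{t+1}\rangle^{\perp}$ has codimension $t+1$. This presumes the $t+1$ vertices are linearly independent, which is not automatic: by Theorem~\ref{independence1} their projections to $\mathcal V$ span a space of dimension $t-1$ or $t$, and the $(t-1)$-dimensional case genuinely occurs. In that case your $\Pi$ has codimension only $t$, the projected subspace $\Pi^*$ has codimension $t-1$ in $\mathbb P_n$, and Theorem~\ref{main} does not apply. The paper disposes of this case by applying Theorems~\ref{independence1} and~\ref{independence2} to the \emph{$v_i$ side}: if the projections of the $v_i$ span only a $(t-1)$-space, that space contains $q+1$ points of $\mathcal V$, which lift to $\geq q$ vertices sharing all common neighbours with $X$; together with $(t-1)!-1\geq t$ of the $w_j$ this produces a $K_{t,q}\supseteq K_{t,(t-1)!+1}$, contradicting the known property of $\Gamma(t)$. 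This is precisely where the hypothesis $q\geq (t-1)!+1$ enters.

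\textbf{The finiteness hypothesis.} Your last paragraph applies Theorems~\ref{independence1} and~\ref{independence2} to $t+1$ of the $w_j$ projections in $\Pi^*$. This is the wrong side: even if every such $(t+1)$-tuple spans a $t$-space, that says nothing about whether $\Pi^*\cap\Sigma$ has a positive-dimensional component (which could avoid $\mathcal V$ entirely). The paper's argument here is much more direct: any $t$ of the $v_i$ already have at most $(t-1)!$ common neighbours (since $\Gamma(t)$ contains no $K_{t,(t-1)!+1}$), so the set of common neighbours of $X$ is finite, hence its projection to $V$ is finite, and one then invokes Theorem~\ref{main}. Your sketch does not reach this conclusion, and the phrase ``combining such configurations with the degree bound\ldots should exclude'' is not an argument.
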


\begin{proof}
Let $X=\{x_1,x_2,\ldots,x_{t+1}\}$ be $t+1$ distinct vertices of $\Gamma (t)$. The set of common neighbours of the elements of $X$ is $\Pi^{\perp} \cap \Gamma(t)$, where $\Pi$ is the subspace spanned by $X$. If any two elements of $X$ project from $P_{\infty}$ onto the same point of $V$, then $P_{\infty} \in \Pi$ and hence $\Pi^{\perp} \subset P_{\infty}^{\perp}$. Since $P_{\infty}^{\perp}$ is the hyperplane $x_{n+1}=0$, $\Pi^{\perp} \cap \Gamma (t)=\emptyset$, and the elements of $X$ have no common neighbour.

Therefore, we assume now that all the points in $X$ project from $P_{\infty}$ onto distinct points of $V$. Then, by Theorem \ref{independence1}, $\dim \Pi\geqslant t-1$.

If $\dim \Pi=t-1$, then by Theorem~\ref{independence1}, the projection of $\Pi$ onto $V$ contains at least $q$ points of $V$. Therefore, there are at least $q$ points $Y$ of $\Pi$ on the lines joining $P_{\infty}$ to the points of $V$. We wish to prove that the points of $Y$ are vertices of the graph $\Gamma (t)$. To do this, we have to show that the points of $Y$, which are of the form $\langle (v,\lambda) \rangle$, where $v \in V$ and $\lambda \in \overline{{\mathbb F}_{q}}$, are of the form $\langle (v,\lambda) \rangle$, where $v \in V$ and $\lambda \in {\mathbb F}_{q}$. Assuming that the vertices in $X$ have at least two common neighbours, we can suppose that there is a common neighbour of the elements of $X$ of the form $\langle (u,\mu) \rangle$, where $u \in V$, $u \neq -v$ and $\mu \in {\mathbb F}_q$, is a common neighbour of the elements of $X$. Then $\langle (u,\mu) \rangle$ is in $\Pi^{\perp}$ and since $Y \subset \Pi$,
$$
N(u+v)=\lambda\mu.
$$
Since $N(u+v) \in {\mathbb F}_q$ and $\mu \in {\mathbb F}_q$, we have that $\lambda \in {\mathbb F}_q$ and so the points of $Y$ are vertices of the graph $\Gamma (t)$. Therefore, the vertices of $X$ have at least $q$ common neighbours.
Since $\Gamma$ contains no $K_{t,(t-1)!+1}$, if $q \geqslant (t-1)!+1$, then this case cannot occur.

If $\dim \Pi=t$ then $\dim \Pi^{\perp}=n-t$. Let $Y$ be the points of $\Pi^{\perp}$ which project from $P_{\infty}$ onto $V$. Arguing as in the previous paragraph, the points $Y$ are vertices of the graph $\Gamma (t)$. Since the vertices of $X$ have at most $(t-1)!+1$ common neighbours, there are a finite number of points in $Y$ and so a finite number of points in the projection of $\Pi^{\perp}$ onto $V$. By Theorem~\ref{main}, this projection contains at most $(t-1)!-2$ points of $V$, so there are at most $(t-1)!-2$ points in $Y$. Therefore, the vertices in $X$ have at most $(t-1)!-2$ common neighbours.
\end{proof}

\bibliographystyle{plain}

{\small Simeon Ball}  \\
{\small Departament de Matem\`atica Aplicada IV}, \\
{\small Universitat Polit\`ecnica de Catalunya, Jordi Girona 1-3},
{\small M\`odul C3, Campus Nord,}\\
{\small 08034 Barcelona, Spain} \\
{\small {\tt simeon@ma4.upc.edu}}

{\small Valentina Pepe} \\
{\small SBAI Department}, \\
{\small Sapienza University of Rome},
{\small Via Antonio Scarpa 16 \\
{\small 00161 Rome, Italy}\\
{\small {\tt valepepe@sbai.uniroma1.it}}

\end{document}